\newtheorem{theorem}{Theorem}[section]
\newtheorem{lemma}[theorem]{Lemma}
\newtheorem{corollary}[theorem]{Corollary}
\newenvironment{proof}{{\bf Proof.}}{\hfill$\Box$\\}
\newenvironment{definition}{{\vskip 3ex\bf Definition.} }{\\}
\newenvironment{remark}{{\vskip 3ex\bf Remark.} }{\\}
\newcommand{\R}{\mathbb{R}}
\newcommand{\ad}{\mathrm{ad}}
\newcommand{\rk}{\mathrm{rk}}
\newcommand{\spn}{\mathrm{span}}
\newcommand{\g}{\mathfrak{g}}
\title{\bf Parabolic $(3,5,6)$-distributions and $Gl(2)$-structures}
\author{Wojciech Kry\'nski\thanks{
Institute of Mathematics, Polish Academy of Sciences, ul.~\'Sniadeckich 8, 00-956 Warszawa, Poland. E-mail: krynski@impan.gov.pl.\newline
Supported by Junior Research Fellowship from Erwin Schr\"odinger Institute, Vienna.
}}
\begin{document}
\maketitle

\begin{abstract}
We consider rank 3 distributions with growth vector $(3,5,6)$. The class of such distributions splits into three subclasses: parabolic, hyperbolic and elliptic. In the present paper, we deal with the parabolic case. We provide a classification of such distributions and exhibit connections between them and $Gl(2)$-structures. We prove that any $Gl(2)$-structure on three and four dimensional manifold can be interpreted as a parabolic $(3,5,6)$-distribution.
\end{abstract}

{\bf Keywords:} $Gl(2)$-structures, vector distributions

\section{Introduction}\label{s1}
In the recent years $Gl(2)$-structures have attracted much attention due to their links to ODEs. The first result in this direction goes back to the paper of S-S.~Chern \cite{Ch} who showed that if an ordinary differential equation of third order satisfies W\"unschmann condition then it defines a conformal Lorentz metric on its solutions space. The similar observation for ODEs of 4th order was made by R.~Bryant in his paper on exotic holonomies \cite{B}. The general case was treated by M.~Dunajski and P.~Tod \cite{DT} whereas a more detailed analysis of equations of order 5 was given in \cite{GN}. The links between ODEs and $Gl(2)$-structures were also exhibited in \cite{DG,FKN,K2,N}.

Simultaneously, the serious progres has been made in understanding geometry of non-holonomic distributions and wide classes of distributions have been classified \cite{B2,DZ1,DZ2,KZ}.
It is worth to mention that the new impact came form control theory and works on so-called singular curves which, in many cases, allow to understand the geometry of distributions (see \cite{AS,JKP,K1,M,Z}).
In the present paper we show that the two topics: $Gl(2)$-structures and distributions, are strongly related.

To be more precise, we consider rank 3 distribution $D$ on a 6-dimensional manifold $M$ and assume that $D$ has growth vector $(3,5,6)$ (i.e.\ $\rk\,[D,D]=5$ and $[D,[D,D]]=TM$; we say that $D$ is (3,5,6)-distribution). The study of such a class of distributions was initiated by B.~Doubrov \cite{D}. He showed that the class splits into three subclasses distinguished by the signature of a certain bilinear form associated to a distribution, i.e.~$D$ can be either parabolic, elliptic or hyperbolic. Doubrov concentrated on elliptic and hyperbolic cases, which can be described in terms of Cartan geometry modelled on $Sl(4)$. The distributions of elliptic and hyperbolic type correspond to systems of two PDEs, elliptic or hyperbolic respectively, for one function in two independent variables. The results of Doubrov give clear and complete picture of the geometry of such PDEs.

The geometry of parabolic $(3,5,6)$-distributions is more complicated. First of all the algebra of infinitesimal symmetries can be infinite dimensional and thus, \emph{a priori}, there is no hope for Cartan connection. Moreover there is no unique symmetric model and in fact there is a splitting to a number of subclasses. There is also no clear link to the theory of PDEs since parabolic systems of two PDEs in two variables give rise to $(3,4,6)$-distributions, which can be reduced to $(2,3,5)$-distributions considered by Cartan in his famous paper \cite{C}. However, in the present paper we managed to solve the problem of classification of parabolic $(3,5,6)$-distributions almost completely (we omit one branch only). Additionally, we discover a remarkable phenomenon that all $Gl(2)$-structures on 3 and 4 dimensional manifolds can be interpreted as some parabolic $(3,5,6)$-distributions. In dimension 4 the result does not depend on the fact whether a $Gl(2)$-structure is defined by an ODE or not (in the case of dimension 3 all $Gl(2)$-structures are of equation type, see \cite{FKN,K2}). Therefore, as a by-product, we get a unified model for all $Gl(2)$-structures on 4-dimensional manifolds.
\vskip 1ex
{\bf Acknowledges.} I would like to express my gratitude to Brois Doubrov for his comments and questions posted to me while I was working on this paper.

\section{Preliminaries}\label{s2}
Let $D$ be a $(3,5,6)$-distribution on a manifold $M$. Then, the Lie bracket of vector fields gives rise, at each point $x\in M$, to the surjection $D(x)\wedge D(x)\to[D,D](x)/D(x)$
$$
(v_1,v_2)\mapsto [V_1,V_2](x)\mod D(x),
$$
where in order to compute the Lie bracket on the right hand side we extend vectors $v_1,v_2\in D(x)$ to local sections $V_1$ and $V_2$ of $D$ in an arbitrary way, and the result does not depend on an extension. Since $\dim D(x)\wedge D(x)=3$ and $\dim [D,D](x)/D(x)=2$ the mapping has one dimensional kernel. Any element of $D(x)\wedge D(x)$ is decomposable and thus there is the unique subdistribution
$$
D_2\subset D
$$
of rank 2 such that $[D_2,D_2]\subset D$ (one can consider $D_2$ as a square root of $D$). There are two cases: $D_2$ can be integrable or $[D_2,D_2]=D$. In the second case $D$ is uniquely determined by $D_2$ which has growth vector $(2,3,5,6)$. All distributions of type $(2,3,5,6)$ were classified by B.~Doubrov and I.~Zelenko in \cite{DZ1} and therefore, in what follows, we will assume that $D_2$ is integrable. For a convenience we will denote
$$
D_5=[D,D].
$$

Now, assume that $(X_1,X_2)$ is a local frame of $D_2$ and let $Y$ be a vector field complementing $X_1$ and $X_2$ to a local frame of $D$. Define
$$
Y_i=[Y,X_i].
$$
Then $(X_1,X_2,Y,Y_1,Y_2)$ is a local frame of $[D,D]$ and we can complement this tuple to the full local frame on $M$ by choosing a vector field $Z$. Following B.~Doubrov \cite{D} we define a $2\times 2$ matrix-valued function $(a_{ij})$ by the formula
$$
[X_i,Y_j](x)=a_{ij}(x)Z(x)\mod [D,D](x).
$$
It is straightforward to check that for any $x\in M$ the matrix $(a_{ij}(x))$ is symmetric and if we make a different choice of $Y$ and $Z$ then it is multiplied by a number. Moreover if we take different $X_1$ and $X_2$ the matrix $(a_{ij}(x))$ transforms as a bilinear form. Therefore at each point $x\in M$ there is a well defined bilinear symmetric form on $D_2(x)$ given up to multiplication by a number. There are three cases depending on the signature of $(a_{ij}(x))$: if $(a_{ij}(x))$ is definite then we say that $D$ is \emph{elliptic at $x$}, if $(a_{ij}(x))$ is indefinite then we say that $D$ is \emph{hyperbolic at $x$} or if $(a_{ij}(x))$ is not of a full rank then we say that $D$ is \emph{parabolic at $x$}. The parabolic case splits to the two subsequent cases: if $(a_{ij}(x))$ has rank 1 then we say that $D$ is \emph{non-degenerated parabolic at $x$} or if $(a_{ij}(x))$ has rank 0 then we say that $D$ is \emph{degenerated parabolic at $x$}.

\begin{definition}
A $(3,5,6)$-distribution $D$ is \emph{regular at $x\in M$} if there exists a neighbourhood of $x$ such that the signature of $(a_{ij})$ is constant in this neighbourhood ($D$ is either elliptic or hyperbolic or non-degenerated parabolic or degenerated parabolic). Otherwise we say that $D$ is \emph{singular at $x$}.
\end{definition}

Clearly if $D$ is elliptic or hyperbolic at $x$ then it is also elliptic or hyperbolic in a small neighbourhood of $x$ and thus all elliptic and hyperbolic points are regular. On the other hand there are singular parabolic points, but we will not consider them in the present paper. We will consider a problem of local equivalence of $(3,5,6)$-distributions at regular parabolic points and thus we will just say that $D$ is \emph{parabolic} (\emph{degenerated} or \emph{non-degenerated}).

In the whole paper we say that two structures on a manifold are \emph{equivalent} if there exists a diffeomorphism transforming one structure onto the other.

\section{Reduction}\label{s3}
Assume first that $D$ is degenerated parabolic. It follows that $[X_i,Y_j]=0\mod D_5$. However, $[D,D_5]=TM$ and thus for each choice of $Y$ as in Section \ref{s2} we have a surjection $D_5(x)\to T_xM/D_5(x)$
$$
v\mapsto [Y,V](x)\mod D_5(x)
$$
where $V$ is an extension of $v\in D_5(x)$ to a local section of $D_5$. The mapping has 4-dimensional kernel which does not depend on the choice of $Y$. Therefore there is a well defined subdistribution $D_4\subset D_5$. Clearly $D\subset D_4$. Now we can consider the mapping $D_2(x)\to D_5(x)/D_4(x)$
$$
v\mapsto [Y,V](x)\mod D_4(x)
$$
where $V$ is an extension of $v\in D_2(x)$ to a local section of $D_2$. This mapping has one dimensional kernel $D_1\subset D_2$ which is again invariantly assigned to a distribution.

If $D$ is non-degenerated parabolic then the situation looks similar. Namely the matrix $(a_{ij}(x))$ which is a bilinear form on $D_2$ has one dimensional kernel for any $x$ and thus we have rank one distribution $D_1\subset D_2$. Then we can define $D_4=[D_1,D]$.

Denoting $D_3=D$, in both cases we get the flag
$$
D_1\subset D_2\subset D_3\subset D_4\subset D_5\subset TM.
$$
\begin{lemma}\label{l1}
If $D$ is a regular parabolic $(3,5,6)$-distribution on a manifold $M$ then the associated flag $(D_i)_{i=1,\ldots,5}$ satisfies
\begin{equation}\label{eq1}
[D_1,D_2]=D_2,\qquad[D_1,D_3]=D_4,\qquad [D_1,D_4]=D_4
\end{equation}
\end{lemma}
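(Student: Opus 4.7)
The strategy is to fix an adapted local frame, identify $D_4$ concretely, and then verify the three identities in turn; the first two come for free, while the third requires Jacobi bookkeeping that splits by subcase. I would start by choosing a local frame $(X_1,X_2)$ of $D_2$ with $X_1$ spanning $D_1$, completing it to a frame $(X_1,X_2,Y)$ of $D_3=D$, and setting $Y_i=[Y,X_i]$ so that $(X_1,X_2,Y,Y_1,Y_2)$ is a frame of $D_5$; adjoining $Z$ yields a frame of $TM$. In both parabolic subcases $X_1\in\ker(a_{ij})$, so $a_{11}=a_{12}=0$, while $a_{22}\neq 0$ in the non-degenerated case and $a_{22}=0$ in the degenerated case. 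A direct check identifies $D_4=\spn(X_1,X_2,Y,Y_1)$ in both subcases: from $D_4=[D_1,D]$ together with $[X_1,Y]=-Y_1$ in the non-degenerated case, and from the defining map $D_2\to D_5/D_4$ forcing $Y_1\in D_4$ in the degenerated case.

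The first two identities are easy. For $[D_1,D_2]=D_2$: the inclusion $D_1\subset D_2$ gives $\supseteq$, while integrability of $D_2$ gives $[D_1,D_2]\subseteq [D_2,D_2]\subseteq D_2$. For $[D_1,D_3]=D_4$: the brackets $[X_1,X_1]=0$, $[X_1,X_2]\in D_2$, and $[X_1,Y]=-Y_1$ yield $[D_1,D_3]=D_3+\spn(Y_1)=D_4$ by the identification above.

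The main step is $[D_1,D_4]=D_4$. The inclusion $\supseteq$ is automatic from $D_1\subset D_4$. For the reverse, running $W$ through the frame of $D_4$, the only bracket $[X_1,W]$ not manifestly in $D_4$ is $[X_1,Y_1]$; since $a_{11}=0$ gives $[X_1,Y_1]\in D_5$, writing $[X_1,Y_1]=p_1X_1+p_2X_2+p_3Y+p_4Y_1+p_5Y_2$ reduces the whole claim to proving $p_5=0$. In the non-degenerated case I would apply Jacobi to $X_1,X_2,Y_1$ and reduce modulo $D_5$: the vanishings $a_{1j}=0$ put $[X_2,Y_1]$ in $D_5$, and since $[X_1,\cdot]$ then preserves $D_5$ the term $[X_1,[X_2,Y_1]]$ is in $D_5$; the term $[Y_1,[X_1,X_2]]$ is also in $D_5$ because $[X_1,X_2]\in D_2$ and the relevant brackets with $Y_1$ lie in $D_5$; the only surviving contribution is $[X_2,[X_1,Y_1]]$, where the sole non-$D_5$ bracket $[X_2,Y_2]\equiv a_{22}Z\mod D_5$ picks out exactly the $p_5Y_2$ summand and gives $Z$-coefficient $p_5a_{22}$. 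Jacobi therefore forces $p_5a_{22}=0$, hence $p_5=0$ since $a_{22}\neq 0$. In the degenerated case this argument is vacuous because $a_{22}=0$, so I would instead use Jacobi on $X_1,Y,Y_1$, which gives $[Y,[X_1,Y_1]]=[X_1,[Y,Y_1]]$. By the kernel definition of $D_4$ in the degenerated case, $Y_1\in D_4$ forces $[Y,Y_1]\in D_5$; and $a_{1j}=0$ for both $j$ implies that $[X_1,\cdot]$ preserves $D_5$, so $[X_1,[Y,Y_1]]\in D_5$. Hence $[Y,[X_1,Y_1]]\in D_5$, and $[X_1,Y_1]\in D_4$ by the same kernel definition.

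The main obstacle is precisely this bifurcation: $D_4$ is built in intrinsically different ways on each branch, so the proof of $[X_1,Y_1]\in D_4$ must be matched to the characterization actually in force, and neither single Jacobi triple suffices for both subcases. Once the adapted frame is fixed and $D_4$ is pinned down as $\spn(X_1,X_2,Y,Y_1)$, the rest reduces to systematic bookkeeping of Jacobi identities modulo $D_5$.
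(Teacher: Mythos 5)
Your proof is correct and follows essentially the same route as the paper: the first two identities are dispatched by integrability of $D_2$ and the definitions of $D_1$, $D_4$, and the third is reduced to showing the $Y_2$-component of $[X_1,Y_1]$ vanishes, using the Jacobi identity on $(X_1,X_2,Y_1)$ modulo $D_5$ in the non-degenerated case and on $(X_1,Y,Y_1)$ in the degenerated case. The only cosmetic difference is that the paper normalizes $Z=[X_2,Y_2]$ (resp.\ $Z=[Y,Y_2]$) rather than carrying the factor $a_{22}$ explicitly.
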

\begin{proof}
The first equality follows from the fact that $D_2$ is always integrable in our considerations. The second follows from the definition of $D_1$ and $D_4$. In order to prove $[D_1,D_4]=D_4$ let us assume that $(X_1,X_2,Y,Y_1,Y_2,Z)$ is a local frame on $M$ as in Section \ref{s2}. Moreover assume that $X_1$ spans $D_1$. Then $Y_1=[Y,X_1]$ complements $(X_1,X_2,Y)$ to a local frame of $D_4$. We shall show that $[X_1,Y_1]=0\mod D_4$. In general we have
$$
[X_1,Y_1]=fY_2\mod D_4
$$
for some $f$. The proof splits into two cases.

In the degenerated case we may assume that $Z=[Y,Y_2]$. Then we consider $[Y,[X_1,Y_1]]$ and apply Jacobi identity. On the one hand we get
$$
[Y,[X_1,Y_1]]=fZ\mod D_5
$$
On the other hand we get
$$
[Y,[X_1,Y_1]]=[Y_1,Y_1]+[X_1,[Y,Y_1]]=0+[X_1,\tilde Y]\mod D_5
$$
for some section $\tilde Y$ of $D_5$. But, since $(a_{ij})=0$, we get $[X_1,\tilde Y]=0\mod D_5$ and consequently $f=0$.

In the non-degenerate case we may assume that $Z=[X_2,Y_2]$. Then we consider $[X_2,[X_1,Y_1]]$ and apply Jacobi identity. On the one hand we get
$$
[X_2,[X_1,Y_1]]=fZ\mod D_5
$$
On the other hand we get
$$
[X_2,[X_1,Y_1]]=[\tilde X,Y_1]+[X_1,\tilde Y]\mod D_5
$$
for some section $\tilde X$ of $D_2$ and some section $\tilde Y$ of $D_5$. But, by definition of our frame the only non-zero entry of $(a_{ij})$ is $a_{22}$ and thus we get $[\tilde X,Y_1]=[X_1,\tilde Y]=0\mod D_5$. As a result we get $f=0$ as desired.
\end{proof}

Now we can define the fundamental \emph{reduction} of parabolic $(3,5,6)$-distribution. Namely, from \eqref{eq1} it follows that $D_1$ is contained in Cauchy characteristic of both $D_2$ and $D_4$. Thus we can consider (at least locally) the quotient manifold $N=M/D_1$ with the quotient mapping
$$
q\colon M\to N
$$
and with two well defined distributions
$$
B_1=q_*(D_2),\qquad B_3=q_*(D_4)
$$
such that $\rk\,B_1=1$, $\rk\,B_3=3$ and $B_1\subset B_3$. A  pair $(B_1,B_3)$ on $N$ will be called the \emph{reduced pair of $D$}.

There exists also a converse construction and it appears that the reduced pair $(B_1,B_3)$ contains all information about the original distribution $D$. Having a pair $(B_1,B_3)$ on a manifold $N$ we consider first the quotient vector bundle  $B_3/B_1\to N$ and then we define a manifold
$$
\tilde M=P(B_3/B_1)
$$
by taking the total space of the projectivisation of the bundle $B_3/B_1\to N$. $\tilde M$ is a manifold of dimension $\dim N+1$ and we have a fibration $\pi\colon\tilde M\to N$. On $\tilde M$ we define a canonical rank 3 distribution
$$
\tilde D_3(x)=\{v\in T_x\tilde M\ |\ \pi_*(v)\in L(x)\}
$$
where for $x\in\tilde M$, which is an element of $P(B_3(\pi(x))/B_1(\pi(x)))$, we denote by $L(x)$ a two dimensional subspace of $B_3(\pi(x))$ containing $B_1(\pi(x))$ and defining $x\in P(B_3(\pi(x))/B_1(\pi(x)))$. By definition $\tilde D_3$ contains the vertical rank-one distribution $\tilde D_1$ tangent to the fibres of $\pi$. There is also a well defined subdistribution $\tilde D_2=\pi_*^{-1}(B_1)$ of rank 2. Moreover, it can be easily seen that $\tilde D_4=[\tilde D_1,\tilde D_3]$ is a rank 4 distribution which coincide with $\pi_*^{-1}(B_3)$. It follows that the flag $\tilde D_1\subset\tilde D_2\subset\tilde D_3\subset\tilde D_4$ satisfies relations \eqref{eq1}.
\begin{lemma}\label{l2}
Assume that $D_1\subset D_2\subset D_3\subset D_4$ satisfies \eqref{eq1}. Then the natural mapping $\Phi\colon M\to \tilde M$
$$
\Phi(x)=q_*(D_3(x))/B_1(q(x))\in P(B_3(q(x))/B_1(q(x)))
$$
is a local diffeomorphism which establishes an equivalence of flags $(D_i)_{i=1,\ldots,4}$ and $(\tilde D_i)_{i=1,\ldots,4}$. In particular $D_3$ and $\tilde D_3$ are equivalent.
\end{lemma}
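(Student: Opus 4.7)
The plan is to use the identity $\pi \circ \Phi = q$, which is immediate from the definitions, to reduce the lemma to a single point: nonvanishing of $d\Phi$ along $D_1$. Indeed $\dim M = \dim N + 1 = \dim \tilde M$, so $\Phi$ is a local diffeomorphism as soon as $d\Phi$ is injective, and $\pi \circ \Phi = q$ forces $\ker d\Phi_x \subset \ker dq_x = D_1(x)$. Moreover, once $\Phi$ is known to be a local diffeomorphism, the equivalence of the flags will follow directly from $\pi \circ \Phi = q$ and the definitions of $\tilde D_i$.

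First I would verify well-definedness and smoothness. By the first two equalities of \eqref{eq1}, $D_1$ is Cauchy characteristic for both $D_2$ and $D_4$, so $B_1 = q_*(D_2)$ and $B_3 = q_*(D_4)$ are genuine distributions on $N$; and since $D_2 \subset D_3$, the 2-plane $q_*(D_3(x)) \subset B_3(q(x))$ automatically contains $B_1(q(x))$, giving a well defined element of $P(B_3(q(x))/B_1(q(x)))$. Then I would show $d\Phi|_{D_1}$ is nonzero; this is the main step and it is where the third relation $[D_1,D_3]=D_4\ne D_3$ enters. Take a local frame $(X_1, X_2, Y)$ of $D_3$ with $X_1$ spanning $D_1$ and $X_2 \in D_2$, and consider the integral curve $x(t)$ of $X_1$ through $x$. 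Since $q(x(t)) \equiv q(x)$, the line $\Phi(x(t))$ sits in the fixed projective line $P(B_3(q(x))/B_1(q(x)))$ and is represented by the class of $q_* Y_{x(t)}$; the standard Lie-derivative identity (using $q \circ \phi^t_{X_1} = q$) yields
\begin{equation*}
\frac{d}{dt}\bigg|_{t=0} q_* Y_{x(t)} = q_*[X_1,Y]_{x}.
\end{equation*}
Since $D_2$ is integrable, the relation $[D_1,D_3]=D_4$ forces $[X_1,Y] \notin D_3$, hence $[X_1,Y] \notin D_2$; so $q_*[X_1,Y] \notin B_1$ and the derivative is nonzero modulo $B_1$.

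The flag equivalence is then automatic from $\pi \circ \Phi = q$ and the definitions. Namely $\Phi_*(D_1) = \ker \pi_* = \tilde D_1$; for $i = 2$ one has $\pi_*(\Phi_*(D_2)) = q_*(D_2) = B_1$ together with $\Phi_*(D_2) \supset \tilde D_1$, so $\Phi_*(D_2) = \pi_*^{-1}(B_1) = \tilde D_2$ by rank; an analogous argument with $B_3$ gives $\Phi_*(D_4) = \tilde D_4$; and by the very definition of $\tilde D_3$, $\pi_*(\tilde D_3(\Phi(x))) = q_*(D_3(x)) = \pi_*(\Phi_*(D_3(x)))$, which combined with $\Phi_*(D_3) \supset \tilde D_1$ and the ranks forces $\Phi_*(D_3) = \tilde D_3$. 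The only substantive point is the nonvanishing of $d\Phi$ on $D_1$; everything else is bookkeeping once the derivative of $\Phi$ along $D_1$ has been identified with a Lie bracket modulo $D_2$.
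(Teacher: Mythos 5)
Your proposal is correct and follows essentially the same route as the paper: reduce everything to showing $\Phi$ is a local diffeomorphism via $\pi\circ\Phi=q$, and establish that by showing $\Phi_*X\neq 0$ for $X$ spanning $D_1$, using $[D_1,D_3]=D_4$ to get $[X_1,Y]\notin D_3$ (the paper phrases this as $[X,Y]$ being transversal to $D_3\subset D_4$); your version merely spells out the Lie-derivative computation and the flag bookkeeping more explicitly. One small indexing slip: the Cauchy-characteristic properties of $D_1$ for $D_2$ and $D_4$ come from the \emph{first and third} relations of \eqref{eq1} (the second, $[D_1,D_3]=D_4$, is the one you correctly reserve for the nonvanishing step), and note that what the projective derivative actually requires is the condition $[X_1,Y]\notin D_3$ you already proved, not merely $q_*[X_1,Y]\notin B_1$.
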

\begin{proof}
The first and the third relation of \eqref{eq1} allow us to define the reduction. By construction $\pi_*(\Phi_*(D_i(x)))=\pi_*(\tilde D_i(\Phi(x)))=q_*(D_i(x))$ for any $x\in M$ and $i=1,2,3,4$. Therefore, in order to finish the proof it is sufficient to prove that $\Phi$ is a local diffeomorphism, i.e.\ the fibres of $q$ are transformed onto the fibres of $\pi$. But this follows from the second relation of \eqref{eq1}. Namely there exists a section $X$ of $D_1$ and a section $Y$ of $D_3$, transversal to $D_2\subset D_3$ such that $[X,Y]$ is transversal to $D_3\subset D_4$. It follows that $\Phi_*(X)\neq 0$.
\end{proof}

Directly from Lemma \ref{l2} we get the following
\begin{corollary}\label{c1}
Two regular parabolic $(3,5,6)$-distributions $D$ and $D'$ are equivalent if and only if the corresponding reduced pairs $(B_1,B_3)$ and $(B_1',B_2')$ are equivalent.
\end{corollary}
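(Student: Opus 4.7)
The corollary follows almost directly from Lemma \ref{l2}, so the plan is simply to package the two implications around that lemma, with the main conceptual content being the verification that the flag $(D_i)_{i=1,\ldots,5}$ and the subsequent reduction are canonically attached to $D$.

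For the forward direction, I would first observe that each step in Section \ref{s3} that produced the flag $D_1\subset D_2\subset D_3\subset D_4\subset D_5$ was defined purely in terms of iterated Lie brackets and kernels of intrinsically defined maps (the bilinear form $(a_{ij})$ in the non-degenerate case, and the maps $D_5\to TM/D_5$ and $D_2\to D_5/D_4$ in the degenerate case). Consequently, any diffeomorphism $\phi\colon M\to M'$ intertwining $D$ and $D'$ automatically intertwines the whole flags. In particular $\phi_*D_1=D_1'$, so $\phi$ descends to a diffeomorphism $\bar\phi\colon N=M/D_1\to N'=M'/D_1'$ with $\bar\phi\circ q=q'\circ\phi$. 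Since $B_1=q_*D_2$ and $B_3=q_*D_4$, we get $\bar\phi_*B_1=B_1'$ and $\bar\phi_*B_3=B_3'$, i.e.\ the reduced pairs are equivalent.

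For the backward direction, suppose $\psi\colon N\to N'$ satisfies $\psi_*B_1=B_1'$ and $\psi_*B_3=B_3'$. Then $\psi$ induces a bundle isomorphism $B_3/B_1\to B_3'/B_1'$ covering $\psi$, and hence a diffeomorphism $\tilde\psi\colon\tilde M=P(B_3/B_1)\to\tilde M'=P(B_3'/B_1')$. From the very definition of $\tilde D_3$ in terms of $\pi$ and $L(x)$, the map $\tilde\psi$ intertwines $\tilde D_3$ and $\tilde D_3'$. Now apply Lemma \ref{l2} on both sides: the canonical maps $\Phi\colon M\to\tilde M$ and $\Phi'\colon M'\to\tilde M'$ are local diffeomorphisms matching $D_3$ with $\tilde D_3$ and $D_3'$ with $\tilde D_3'$. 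The composition $(\Phi')^{-1}\circ\tilde\psi\circ\Phi$ is then the required local equivalence between $D$ and $D'$.

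The only subtle point, and the one I would highlight carefully, is that the reduction $M\rightsquigarrow N$ only exists locally (because $D_1$ is only locally a simple foliation with Hausdorff leaf space), so all statements have to be understood as local equivalences around a chosen regular parabolic point; this is harmless since the corollary is a local equivalence statement in the first place. Beyond that, there are no real obstacles: the argument is formal once one trusts the intrinsic construction of the flag and Lemma \ref{l2}.
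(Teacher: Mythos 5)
Your argument is correct and is exactly the intended one: the paper offers no written proof beyond the phrase ``Directly from Lemma \ref{l2}'', and your two implications (canonicity of the flag for the forward direction, reconstruction via $\Phi$ and the lifted map $\tilde\psi$ for the converse) are precisely the unpacking of that remark. The caveat about the reduction being only local is also appropriate and consistent with the paper's ``(at least locally)'' in Section \ref{s3}.
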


\section{Degenerated case}\label{s4}
In this section we consider degenerated parabolic $(3,5,6)$-distributions.
\begin{lemma}\label{l3}
Let $D$ be a degenerated parabolic $(3,5,6)$-distribution on $M$ and let $(B_1,B_3)$ be the associated reduced pair on $N$. Then
\begin{enumerate}
\item $B_4=[B_3,B_3]=[B_1,B_3]$ is a rank 4 distribution,
\item $[B_1,B_4]=B_4$, $[B_4,B_4]=TN$,
\item $B_3$ has a rank one Cauchy characteristic $C$ and $B_2=C\oplus B_1$ is Cauchy characteristic of $B_4$ (in particular it is integrable).
\end{enumerate}
\end{lemma}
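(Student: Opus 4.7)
The plan is to work locally with the frame $(X_1, X_2, Y, Y_1, Y_2, Z)$ from Section \ref{s2}, specialized so that $X_1$ spans $D_1$, $[X_1, X_2] = 0$ (by integrability of $D_2$), and $Z = [Y, Y_2]$. The structural identities I shall use are $[D_1, D_4] \subset D_4$ from Lemma \ref{l1}, the degeneracy $[X_i, Y_j] \in D_5$, and $[Y, Y_1] \in D_5$, the latter being immediate from the defining property of $D_4$ as the kernel of $v \mapsto [Y, v] \mod D_5$. The quotient map $q_*$ kills $D_1$ and satisfies $q_*(D_4) = B_3$, while $q_*(D_5) = B_3 + \langle q_*(Y_2) \rangle$ has rank $4$.

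For part (1), the identities above give $[D_4, D_4] \subset D_5$, hence $[B_3, B_3] \subset q_*(D_5)$. On the other hand $q_*[X_2, Y] = -q_*(Y_2)$ is transverse to $B_3$, so $[B_1, B_3] \supseteq q_*(D_5)$, and the two inclusions collapse to $[B_3, B_3] = [B_1, B_3] = q_*(D_5) =: B_4$ of rank $4$. For part (2), $[D_2, D_5] \subset D_5$ also follows from the degeneracy condition, so $[B_1, B_4] \subset B_4$ with equality since $B_1 \subset B_4$. The relation $[D_5, D_5] = TM$ (holding because $[D, D_5] = TM$ and $D \subset D_5$) pushes forward to $[B_4, B_4] = TN$.

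The heart of the argument is part (3). Consider the Levi forms $\omega : \Lambda^2 D_4 \to D_5/D_4$ and $\psi : \Lambda^2 D_5 \to TM/D_5$ given by bracketing modulo the appropriate step of the flag. The form $\omega$ has $D_1$ in its radical by Lemma \ref{l1}, so it has even rank $\le 2$ on the $4$-dimensional $D_4$; it cannot vanish identically, else $[D_4, D_4] \subset D_4$ would give $B_4 \subset B_3$, contradicting $B_4 \ni q_*[X_2, Y]$. Hence $\omega$ has rank exactly $2$ and its kernel is a rank-$2$ subbundle of $D_4$ containing $D_1$, which descends via $q_*$ to a rank-$1$ distribution $C \subset B_3$. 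By construction $C$ is the Cauchy characteristic of $B_3$. The form $\psi$ has $D_2$ in its radical by the degeneracy condition and integrability of $D_2$, and is nonzero because $\psi(Y, Y_2) = Z \not\equiv 0 \mod D_5$; hence its rank is $2$, its kernel is rank-$3$, containing $D_2$, and descends via $q_*$ to a rank-$2$ Cauchy characteristic of $B_4$.

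It remains to identify this Cauchy characteristic with $B_2 := C \oplus B_1$. Write $[X_1, Y_2] \equiv a Y_2 \mod D_4$ and $[Y_1, Y_2] \equiv c Z \mod D_5$. Explicit computation of $\ker \omega$ and $\ker \psi$ in our frame, followed by projection to $N$, gives $C \equiv \langle a e_3 + e_4 \rangle \mod B_1$ and Cauchy characteristic of $B_4$ equals $\langle -c e_3 + e_4 \rangle \mod B_1$, with $e_3 = q_* Y$, $e_4 = q_* Y_1$. The desired equality reduces to the single identity $c = -a$. I expect this from the Jacobi identity applied to $(X_2, Y, Y_1)$: the bracket $[X_2, [Y, Y_1]]$ lies in $D_5$ since $[X_2, D_5] \subset D_5$ in the degenerate case, while the Jacobi expansion $[Y, [X_1, Y_2]] + [Y_1, Y_2]$ reduces modulo $D_5$ to $(a + c) Z$, yielding $a + c = 0$. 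This final Jacobi bookkeeping is the most delicate step, though no deeper obstruction is anticipated.
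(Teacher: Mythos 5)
Your argument is correct, but for the decisive part (3) it takes a genuinely different and substantially more computational route than the paper; parts (1) and (2) are handled the same way in both (pushing forward $[D_4,D_4]=[D_2,D_4]=D_5$, $[D_2,D_5]=D_5$, $[D_5,D_5]=TM$ through $q_*$). For part (3) the paper never leaves $N$: statements 1 and 2 force $B_3$ to have growth vector $(3,4,5)$, hence to possess a rank-one Cauchy characteristic $C$, necessarily distinct from $B_1$ because $[B_1,B_3]=B_4$; taking a frame $(V,W,U,[V,U])$ of $B_4$ with $V$ spanning $B_1$ and $W$ spanning $C$, the whole of statement 3 follows from the single coordinate-free identity $[W,[V,U]]=[[W,V],U]+[V,[W,U]]\equiv 0 \bmod B_4$, valid because $[W,V]$ and $[W,U]$ are sections of $B_3$ and $[B_3,B_3]=[B_1,B_3]=B_4$. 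You instead compute both Cauchy characteristics upstairs on $M$ as radicals of the Levi forms of $D_4$ and $D_5$ and match them coefficient by coefficient; this is sound but buys you the bookkeeping the paper avoids. Two points in that bookkeeping deserve attention. First, the radical of $\omega$ is governed by the coefficient $\beta$ in $[X_2,Y_1]\equiv\beta Y_2\bmod D_4$, whereas you normalize via $[X_1,Y_2]\equiv aY_2$; you are implicitly using $[X_1,Y_2]=[X_2,Y_1]$, which does hold (Jacobi on $(X_1,Y,X_2)$ together with $[X_1,X_2]=0$) but should be stated. Second, your final Jacobi expansion is misprinted: for the triple $(X_2,Y,Y_1)$ it should read $[X_2,[Y,Y_1]]=[[X_2,Y],Y_1]+[Y,[X_2,Y_1]]=[Y_1,Y_2]+[Y,[X_2,Y_1]]$, not $[Y,[X_1,Y_2]]+[Y_1,Y_2]$; with $[X_2,Y_1]\equiv aY_2\bmod D_4$ and $[Y,D_4]\subset D_5$ the right-hand side is $(c+a)Z\bmod D_5$ while the left-hand side lies in $[D_2,D_5]\subset D_5$, so the identity $a+c=0$ you flagged as the delicate step does close and your identification of $B_2=C\oplus B_1$ with the Cauchy characteristic of $B_4$ is correct. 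The comparison is instructive: your Levi-form calculation makes the two characteristic directions explicit in the adapted frame, while the paper's argument shows the coefficient matching can be bypassed entirely once one knows $B_4=[B_3,B_3]=[B_1,B_3]$.
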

\begin{proof}
The first two statements follows immediately from the definition of the flag $(D_i)_{i=1,\ldots,5}$ and the definition of $B_i$. Namely, in the degenerated case, $[D_4,D_4]=[D_2,D_4]=D_5$, $[D_2,D_5]=D_5$ and $[D_5,D_5]=TM$.

It follows from statements 1 and 2 that $B_3$ has growth vector $(3,4,5)$ and thus it has Cauchy characteristic $C$ which is of rank one. $C$ does not coincide with $B_1$ since $[B_1,B_3]=B_4$. Therefore $B_2=C\oplus B_1$ is a distribution of rank 2. To prove that $B_2$ is Cauchy characteristic of $B_4$ let us choose a vector field $V$ which spans $B_1$ and a vector field $W$ which spans $C$. Moreover, let $U$ be a vector field complementing $(V,W)$ to a local frame of $B_3$. Then $(V,W,U,[V,U])$ is a local frame of $B_4$. Additionally $[W,V]=[W,U]=0\mod B_3$, since $W$ is a section of Cauchy characteristic of $B_3$. Hence
$$
[W,[V,U]]=[[W,V],U]+[V,[W,U]]=0\mod B_4.
$$
\end{proof}

Now we are ready to prove our first main result.

\begin{theorem}\label{t1}
All degenerated parabolic $(3,5,6)$-distributions are locally equivalent to the canonical Cartan distribution on the mixed jet space $J^{2,1}(\R,\R^2)$. In natural coordinates $(t,u,v,u_1,u_2,v_1)$ on $J^{2,1}(\R,\R^2)$ the distribution is annihilated by the following one-forms
$$
du-u_1dt,\qquad du_1-u_2dt,\qquad dv-v_1dt.
$$
\end{theorem}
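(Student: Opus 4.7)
The plan is to invoke Corollary \ref{c1} and show that any reduced pair $(B_1,B_3)$ coming from a degenerated parabolic distribution admits a universal local normal form; from there the unfolding construction described before Lemma \ref{l2} will reproduce the Cartan distribution on $J^{2,1}(\R,\R^2)$.

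First I would exploit the rigid structure of $(B_1,B_3)$ provided by Lemma \ref{l3}: $B_3$ has a rank-$1$ Cauchy characteristic $C$ transverse to $B_1$, and $B_2=C\oplus B_1$ is the Cauchy characteristic of $B_4=[B_3,B_3]$. Since $C$ is a line field it is integrable; passing to the local quotient $\bar N=N/C$ of dimension $4$, the distribution $B_3$ descends to a rank-$2$ distribution $\bar B_3$ whose growth vector, thanks to Lemma \ref{l3}, is $(2,3,4)$, so $\bar B_3$ is an Engel distribution. The key geometric observation is that the projected line $\bar B_1$ coincides with the canonical Engel line of $\bar B_3$: because $B_2$ is Cauchy characteristic of $B_4$ we have $[B_1,B_4]\subset B_4$, so $\bar B_1$ lies in the Cauchy characteristic of $\bar B_4=[\bar B_3,\bar B_3]$; and since $\bar B_3$ is a maximal isotropic subspace of the skew form $\bar B_4\wedge\bar B_4\to T\bar N/\bar B_4$, it automatically contains the $1$-dimensional kernel of this form, i.e.\ the Cauchy characteristic of $\bar B_4$, which forces $\bar B_1$ to equal that line.

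Once $\bar B_1$ is identified with the Engel line, Engel's classical normal-form theorem delivers coordinates $(t,u,u_1,u_2)$ on $\bar N$ in which
$$
\bar B_3=\spn\bigl(\partial_{u_2},\ \partial_t+u_1\partial_u+u_2\partial_{u_1}\bigr),\qquad \bar B_1=\spn(\partial_{u_2}).
$$
Choosing a local transversal and a fibre coordinate $v$ along $C$ then produces coordinates $(t,u,v,u_1,u_2)$ on $N$ with $C=\spn(\partial_v)$ and $B_3=\spn(\partial_{u_2},\partial_t+u_1\partial_u+u_2\partial_{u_1},\partial_v)$, and $B_1$ is spanned by a vector of the form $\partial_{u_2}+h\partial_v$ for some function $h$. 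The residual fibre reparametrisation $v\mapsto v+g$ preserves the form of $B_3$ and modifies $h$ by a $B_1$-derivative of $g$, so solving a first-order ODE for $g$ along the integral curves of $B_1$ allows us to normalise $h=0$ and conclude $B_1=\spn(\partial_{u_2})$.

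The argument is completed by running the unfolding construction $\tilde M=P(B_3/B_1)$ together with Lemma \ref{l2}: introducing $v_1$ as the projective fibre coordinate on the line $[\partial_t+u_1\partial_u+u_2\partial_{u_1}+v_1\partial_v]$ in $P(B_3/B_1)$ yields
$$
\tilde D_3=\spn\bigl(\partial_{v_1},\ \partial_{u_2},\ \partial_t+u_1\partial_u+u_2\partial_{u_1}+v_1\partial_v\bigr),
$$
whose annihilator is exactly $du-u_1dt,\ du_1-u_2dt,\ dv-v_1dt$, as claimed. The main conceptual obstacle is the intrinsic identification of $\bar B_1$ with the canonical Engel line of $\bar B_3$; the subsequent coordinate adaptation for the fibre $v$ and the final unfolding to $\tilde M$ are essentially routine.
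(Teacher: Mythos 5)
Your argument is correct, but it follows a genuinely different route from the paper's. The paper iterates the reduction of Section~\ref{s3} one more time: it quotients $N$ by $B_1$ (not by $C$) to obtain a $4$-manifold $O=N/B_1$ carrying the pair $A_1=p_*(B_2)$, $A_3=p_*(B_4)$, observes that $A_3$ is an (even-)contact structure with characteristic line $A_1$, and invokes the Darboux theorem to conclude that all such pairs --- and hence all degenerated parabolic distributions --- are locally equivalent; the $J^{2,1}$ model is then identified only by checking that the Cartan distribution is itself degenerated parabolic. You instead quotient $N$ by the Cauchy characteristic $C$ of $B_3$, land on an Engel structure $\bar B_3$ on $\bar N=N/C$, identify $\bar B_1$ with the canonical Engel line via the isotropy argument for the form on $\bar B_4$, and then normalise the lift of $(B_1,B_3)$ back on $N$ before unfolding. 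Your version is longer but buys an explicit derivation of the coordinate model and a direct verification of the stated annihilating one-forms, whereas the paper's is shorter and purely structural. Two small points deserve a word in your write-up: first, the descent of $B_1$ to a well-defined line field $\bar B_1$ on $N/C$ uses $[C,B_1]\subset B_2$, which follows from the integrability of $B_2$ in Lemma~\ref{l3}(3); second, the coefficient $h$ in $B_1=\spn(\partial_{u_2}+h\partial_v)$ may depend on $v$, so the reparametrisation must be allowed to take the form $v\mapsto G(t,u,u_1,u_2,v)$ with $\partial_vG\neq0$, and the normalisation $h=0$ amounts to finding a first integral of $\partial_{u_2}+h\partial_v$ transverse to $\partial_v$ inside each leaf of $B_2$ --- still an elementary ODE argument, but not literally an additive shift $v\mapsto v+g$ with the new $h$ equal to $h+\partial_{u_2}g$.
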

\begin{proof}
For a given degenerated parabolic $D$ on a manifold $M$ we have defined in Lemma \ref{l3} the flag $B_1\subset B_2\subset B_3\subset B_4\subset TN$ on a manifold $N$ such that $B_1$ is contained in Cauchy characteristics of $B_2$ and $B_4$. Moreover $[B_2,B_3]=B_4$. The situation on $N$ is completely similar to the situation on $M$ described in the previous section. We can define new manifold $O=N/B_1$ with the quotient mapping $p\colon N\to O$ and with two distributions $A_1=p_*(B_2)$ and $A_3=p_*(B_4)$ of rank 1 and 3, respectively. Making a repetition of the reasoning of previous section we get that the pair $(A_1,A_3)$ completely determines $B_3$. It also determines $B_1$ which can be recover as a distribution tangent to the fibres of $p$. Thus $(A_1,A_3)$ completely determines the original distribution $D$.

Now, $A_3$ is a contact distribution on 4-dimensional manifold and $A_1$ is its characteristic subdistribution. By Darboux theorem, all such pairs $(A_1,A_3)$ are equivalent. Therefore all degenerated parabolic $(3,5,6)$-distributions are locally equivalent. Cartan distribution on $J^{2,1}(\R,\R^2)$ is degenerated parabolic, and hence it can be taken as a model.
\end{proof}

\section{Non-degenerated case}\label{s5}
In this section we consider non-degenerated parabolic $(3,5,6)$-distributions. In this case, besides relations \eqref{eq1}, we also have the following relations
\begin{equation}\label{eq2}
[D_1,D_5]=D_5,\quad[D_2,D_3]=D_5,\quad[D_2,D_4]=D_5, \quad[D_2,D_5]=TM.
\end{equation}
All of them follows directly form the definitions. It is reasonably to introduce the following graded Lie algebra at each point $x\in M$
$$
\g(x)=\bigoplus_{i=1}^7\g_{-i}(x)
$$
where
$$
\begin{array}{lll}
\g_{-1}(x)=D_1(x),\quad& \g_{-2}(x)=D_2(x)/D_1(x),\quad& \g_{-3}(x)=D_3(x)/D_2(x),\\
\g_{-4}(x)=D_4(x)/D_3(x),\quad& \g_{-5}(x)=D_5(x)/D_4(x),\quad& \g_{-6}(x)=0,\\ &\g_{-7}(x)=T_xM/D_5(x).&\\
\end{array}
$$
and bracket in $\g(x)$ is defined in a standard way using Lie bracket of vector fields. Lie algebra $\g(x)$ is assigned to a distribution $D$ at point $x$ in an invariant way and it will be called \emph{symbol algebra}

Our first aim is to classified all possible graded Lie algebras $\g=\bigoplus_{i=1}^7\g_{-i}$ which can appear as a symbol of a non-degenerated parabolic $(3,5,6)$-distribution. 
\begin{lemma}\label{l4}
Let $\g=\bigoplus_{i=1}^7\g_{-i}$ be a symbol algebra of a non-degenerated parabolic $(3,5,6)$-distribution $D$ at some point $x\in M$. Then there exists a basis $e_1,\ldots,e_5,e_7$ of $\g$ such that $e_i$ spans $\g_{-i}$,
\begin{enumerate}
\item $[e_1,e_2]=0$,
\item $[e_1,e_3]=e_4$,
\item $[e_1,e_4]=0$,
\item $[e_2,e_3]=e_5$,
\item $[e_2,e_5]=e_7$,
\item $[e_3,e_4]=de_7$,
\end{enumerate}
where $d=0$ or $d=1$ and all other brackets $[e_i,e_j]$ vanish. Thus, there are exactly two non-equivalent symbols.
\end{lemma}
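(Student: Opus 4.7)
The plan is to enumerate all graded brackets, use relations \eqref{eq1} and \eqref{eq2} to determine or eliminate each one, and then normalise the single remaining free parameter using the residual rescaling freedom.

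First I would pick any nonzero generators $e_i\in\g_{-i}$ for $i=1,2,3$, reserving $e_4,e_5,e_7$ to be fixed later. A bracket $[\g_{-i},\g_{-j}]$ lands in $\g_{-(i+j)}$, which vanishes whenever $i+j\in\{6,8,9,10,11,12\}$. This instantly kills $[e_1,e_5]$, $[e_2,e_4]$, $[e_3,e_5]$, $[e_4,e_5]$, and every bracket involving $e_7$, leaving only six potential brackets to analyse. Relation $[D_1,D_2]=D_2$ forces $[e_1,e_2]=0$ in the symbol, and similarly $[D_1,D_4]=D_4$ gives $[e_1,e_4]=0$, producing (1) and (3). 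The three surjections $[D_1,D_3]=D_4$, $[D_2,D_3]=D_5$ and $[D_2,D_5]=TM$ then allow me to \emph{define} $e_4:=[e_1,e_3]$, $e_5:=[e_2,e_3]$, $e_7:=[e_2,e_5]$, fixing (2), (4) and (5) on the nose.

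The only bracket not yet determined is $[e_3,e_4]=d\,e_7$ for some $d\in\R$. I would then track the residual rescaling freedom: under $e_i\mapsto\lambda_i e_i$ for $i=1,2,3$, the induced rescalings of $e_4$, $e_5$, $e_7$ are by $\lambda_1\lambda_3$, $\lambda_2\lambda_3$ and $\lambda_2^2\lambda_3$ respectively. A short calculation yields the transformation rule $d\mapsto d\cdot\lambda_1\lambda_3/\lambda_2^2$, and the factor $\lambda_1\lambda_3/\lambda_2^2$ exhausts all of $\R\setminus\{0\}$ as the $\lambda_i$ range over $\R\setminus\{0\}$. Hence $d=0$ is preserved, while any $d\neq 0$ can be normalised to $d=1$.

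I expect the Jacobi identities to be essentially automatic: since $\g$ is concentrated in degrees $-1,\ldots,-5,-7$, a triple bracket on generators $e_i,e_j,e_k$ lands in $\g_{-(i+j+k)}$, and running through the index set $\{1,2,3,4,5,7\}$ the only triple with $i+j+k$ in the active range $\{1,\ldots,5,7\}$ is $(1,2,4)$ with sum $7$; but its three constituent inner brackets $[e_2,e_4]$, $[e_1,e_4]$, $[e_1,e_2]$ all vanish, so Jacobi is trivially satisfied in both cases. Finally, $d=0$ and $d=1$ are genuinely inequivalent as graded Lie algebras, since the vanishing of $[\g_{-3},\g_{-4}]$ is a grading-preserving invariant. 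The only point requiring real care is the rescaling bookkeeping that identifies $d$ as the sole modulus; once this is isolated, the remainder of the argument is immediate.
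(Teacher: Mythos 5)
Your argument is correct and follows essentially the same route as the paper: the grading eliminates all but six brackets, the relations \eqref{eq1} and \eqref{eq2} pin down five of them (the paper realises this by evaluating an adapted frame $(X_1,X_2,Y,Y_1,Y_2,Z)$ at $x$ rather than by abstract generators, but the content is identical), and the last bracket $[e_3,e_4]=d\,e_7$ is normalised by rescaling --- the paper uses only $Y\mapsto\alpha Y$, which already gives $d\mapsto\alpha d$, while you track the full three-parameter freedom to the same effect. Your extra checks (Jacobi for the triple $(1,2,4)$ and the invariance of the vanishing of $[\g_{-3},\g_{-4}]$) are consistent with, and slightly more explicit than, what the paper records.
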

\begin{proof}
First of all, Lie brackets $[e_i,e_j]$ not listed in the lemma necessarily vanish due to the definition of graded Lie algebra.

If we choose a local frame $(X_1,X_2,Y,Y_1,Y_2,Z)$ as in Section \ref{s2} in such a way that $X_1$ spans $D_1$ then we can take $e_1=X_1(x)$, $e_2=X_2(x)$, $e_3=Y(x)$, $e_4=Y_1(x)$, $e_5=Y_2(x)$ and $e_7=Z(x)$. Then we have $[e_1,e_3]=e_4$ and $[e_2,e_3]=e_5$. Moreover, we can assume that $Z=[X_2,Y_2]$ and then $[e_2,e_5]=e_7$. Lie bracket $[e_1,e_2]$ vanish because $D_2$ is integrable. Lie bracket $[e_1,e_4]$ vanish due to the third relation of Lemma \ref{l1}. Then we have $[e_3,e_4]=de_7$ for some $d\in\R$. However, if we substitute $Y:=\alpha Y$ for some $\alpha\neq 0$ then a simple calculation proves that $d$ becomes $\alpha d$. Hence, if $d\neq 0$ we can assume $d=1$.
\end{proof}

\begin{remark}
The two symbols appear in the paper \cite{R} and are denoted: m6\_3\_3 (for $d=0$) and m6\_3\_4 (for $d=1$). Flat distributions on Lie groups corresponding to thees two graded Lie algebras have infinite dimensional algebras of infinitesimal symmetries. For simplicity, m6\_3\_3 we will denote $\g^0$ and m6\_3\_4 we will denote $\g^1$.
\end{remark}

In the next two lemmas we provide basic properties of the reduced pair $(B_1,B_3)$ associated to a non-degenerated parabolic distribution.

\begin{lemma}\label{l6}
Let $D$ be a non-degenerated parabolic $(3,5,6)$-distribution on $M$ and let $(B_1,B_3)$ be the associated reduced pair on $N$. Then
\begin{enumerate}
\item $[B_1,B_3]=B_4$ is a rank 4 distribution,
\item $[B_1,B_4]=TN$,
\item there exists a unique rank 2 subdistribution $B_2\subset B_3$ such that $[B_2,B_2]\subset B_3$ and $B_1\subset B_2$.
\end{enumerate}
Conversely, if a pair $(B_1,B_3)$ satisfies conditions 1 and 2 above, then $\tilde D$ defined in Section \ref{s3} on manifold $\tilde M$ is a non-degenerated parabolic $(3,5,6)$-distribution.
\end{lemma}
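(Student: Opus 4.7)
The plan is to choose a local frame $(X_1,X_2,Y,Y_1,Y_2,Z)$ on $M$ as in Section~\ref{s2}, with $X_1$ spanning $D_1$ and $Z=[X_2,Y_2]$ modulo $D_5$, so that the only nonzero entry of $(a_{ij})$ is $a_{22}\ne 0$. Under $q\colon M\to N$ the field $X_1$ descends to zero, and $q_*X_2,q_*Y,q_*Y_1,q_*Y_2,q_*Z$ form a local frame of $TN$ with $B_1=\spn(q_*X_2)$ and $B_3=\spn(q_*X_2,q_*Y,q_*Y_1)$. For part~1, $[X_2,Y]=-Y_2$ modulo $D_4$ gives $[q_*X_2,q_*Y]\equiv-q_*Y_2\pmod{B_3}$, while $[X_2,Y_1]\in D_5$ (from $a_{21}=0$) gives $[q_*X_2,q_*Y_1]\in q_*(D_5)$; hence $[B_1,B_3]=B_3+\spn(q_*Y_2)=q_*(D_5)$ has rank~$4$. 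For part~2, $[q_*X_2,q_*Y_2]\equiv q_*Z\pmod{q_*(D_5)}$ with $q_*Z$ transversal to $B_4$, so $[B_1,B_4]=TN$.

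For part~3, the key observation is that the Lie bracket modulo $B_3$ is a $B_4/B_3$-valued skew form $\omega\colon\Lambda^2B_3\to B_4/B_3$. Since $B_4/B_3$ is one-dimensional and $B_3$ is three-dimensional, $\omega$ is a scalar $2$-form on a $3$-dimensional space, so it has rank $0$ or $2$. The calculation in part~1 yields $\omega(q_*X_2,q_*Y)\ne 0$, so $\omega$ has rank exactly~$2$ and admits a $1$-dimensional kernel $K\subset B_3$; the same calculation shows $q_*X_2\notin K$, i.e.~$B_1\cap K=0$. Setting $B_2=B_1+K$ produces a rank-$2$ subdistribution containing $B_1$ on which $\omega$ vanishes, hence $[B_2,B_2]\subset B_3$. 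Uniqueness follows from the standard fact that on a $3$-dimensional space a skew form of rank~$2$ vanishes on a $2$-plane if and only if that plane contains the kernel.

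For the converse I would work in local coordinates on $\tilde M=P(B_3/B_1)$. Pick a frame $(V,U_1,U_2,U_0,U_{-1})$ of $TN$ with $V$ spanning $B_1$, $V,U_1,U_2$ spanning $B_3$, $V,U_1,U_2,U_0$ spanning $B_4$, and write $[V,U_i]\equiv c_iU_0\pmod{B_3}$ for $i=1,2$ and $[V,U_0]\equiv dU_{-1}\pmod{B_4}$; conditions~1 and 2 force $(c_1,c_2)\ne 0$ and $d\ne 0$. Introduce a fibre coordinate $p$ on $P(B_3/B_1)$ so that the tautological plane is $L=\spn(V,pU_1+U_2)$, giving $\tilde D_3=\spn(\partial_p,V,pU_1+U_2)$, $\tilde D_2=\spn(\partial_p,V)$, $\tilde D_4=\spn(\partial_p,V,U_1,U_2)$. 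Direct bracket computations yield $[\partial_p,pU_1+U_2]=U_1$ and $[V,pU_1+U_2]\equiv(pc_1+c_2)U_0\pmod{B_3}$; hence $[\tilde D_3,\tilde D_3]=\pi_*^{-1}(B_4)$ has rank~$5$, and a further bracket with $V$ produces a nonzero $U_{-1}$-component via $[V,U_0]\equiv dU_{-1}$, giving $[\tilde D_3,[\tilde D_3,\tilde D_3]]=T\tilde M$. With $\tilde X_1=\partial_p,\tilde X_2=V,\tilde Y=pU_1+U_2$, all entries of $(a_{ij})$ vanish except $a_{22}=-d(pc_1+c_2)$, so $\tilde D$ is non-degenerated parabolic.

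The main obstacle is the locus $\{pc_1+c_2=0\}$ where the growth vector drops; this is a single section of $\pi$ per chart (a submanifold of codimension~$1$ in $\tilde M$), so the statement is to be read on its open dense complement. Both $(c_1,c_2)\ne 0$ and $d\ne 0$ are needed to pin down the rank of $(a_{ij})$ to exactly~$1$, so the two hypotheses are used in an essential and non-overlapping way.
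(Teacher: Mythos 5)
Your parts 1 and 2 and your converse are essentially sound and close to the paper's route: the paper gets 1 and 2 in one line from $[D_2,D_4]=D_5$ and $[D_2,D_5]=TM$, and for the converse it uses a frame adapted to $B_2$ (namely $\tilde D=\spn\{\partial_s,X,Z+sY\}$ with $(X,Y)$ framing $B_2$), which has the effect that its chart automatically omits the section $L=B_2$ of $P(B_3/B_1)$ --- exactly your locus $\{pc_1+c_2=0\}$. Your explicit identification of that degenerate section, and your computation of $(a_{ij})$, are if anything more complete than the paper's one-line verification that $\rk\,[\tilde D,\tilde D]=5$.

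The genuine gap is in part 3. The form $\omega\colon\Lambda^2B_3\to B_4/B_3$ is well defined only if $[B_3,B_3]\subseteq B_4$, and this fails in general: by the paper's Lemma 5 (equivalently, by the symbol relation $[e_3,e_4]=e_7$ in $\g^1$), $B_3$ may have growth vector $(3,5)$, in which case $[B_3,B_3]=TN$ and the bracket form takes values in the \emph{two}-dimensional space $TN/B_3$. Then the ``rank $0$ or $2$'' dichotomy for scalar $2$-forms on a $3$-space is unavailable; worse, the common radical of the $TN/B_3$-valued form is zero (a nonzero radical vector $v$ would force the form to factor through the one-dimensional $\Lambda^2(B_3/\spn\{v\})$, contradicting surjectivity onto $TN/B_3$), so your $K$ is trivial and $B_2=B_1+K=B_1$ has rank $1$, not $2$. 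The construction that works uniformly --- and is what the paper does --- is to bracket only with $B_1$: fix $X$ spanning $B_1$ and set $B_2=\{v\in B_3\ |\ [X,V]\in B_3\}$; by statement 1 the map $v\mapsto[X,V]\bmod B_3$ has image the one-dimensional $B_4/B_3$, so its kernel $B_2$ has rank $2$, contains $X$, and writing $B_2=\spn\{X,W\}$ with $[X,W]\in B_3$ gives $[B_2,B_2]\subset B_3$. Uniqueness is then immediate, since any competitor containing $B_1$ with $[B_2',B_2']\subset B_3$ lies in this kernel.
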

\begin{proof}
The first two statements immediately follows from the definition of the flag $(D_i)_{i=1,\ldots,5}$ and the definition of $B_i$. Namely, in the non-degenerated case, $[D_2,D_4]=D_5$ and $[D_2,D_5]=TM$ (see formula \eqref{eq2}).

To prove statement 3 let us choose a vector field $X$ which spans $B_1$ and consider a mapping $B_3(x)\to B_4(x)/B_3(x)$ defined by the formula $v\mapsto [X,V](x)\mod B_3(x)$, where $V$ is an extension of $v\in B_3(x)$ to a local section of $B_3$. It follows from statement 1 that this mapping has two dimensional kernel and in this way we define $B_2(x)$.

Note that if $B_3$ has growth vector $(3,4,5)$ then it has Cauchy characteristic $C$, which is a distribution of rank 1. Then $B_2=B_1\oplus C$. If $B_3$ has growth vector $(3,5)$ then it is well known that the square root of $B_3$ exists. This square root is exactly $B_2$ defined above.

To prove that any pair $(B_1,B_2)$ satisfying conditions 1 and 2 defines a non-degenerated parabolic $(3,5,6)$-distribution $\tilde D$ on $\tilde M$ it is sufficient to show that $\rk\,[\tilde D,\tilde D]=5$ or $\pi_*([\tilde D,\tilde D])=B_4$. Assume that vector fields $X,Y,Z$ are given, such that $X$ spans $B_1$, $(X,Y)$ is a frame of $B_2$ and $(X,Y,Z)$ is a frame of $B_3$. Additionally, if we assume that $s$ is a parameter of the fiber of $\pi\colon\tilde M\to N$ then we can write $\tilde D=\spn\{\partial_s, X,Z+sY\}$. It follows that $[\tilde D,\tilde D]=\spn\{\partial_s,X,Y,Z,[X,Z]\}=\spn\{\partial_s\}\oplus B_4$ as desired.
\end{proof}

\begin{lemma}\label{l5}
Let $D$ be a non-degenerated parabolic $(3,5,6)$-distribution on $M$, let $(B_1,B_3)$ be the associated reduced pair and let $\g(x)$ be a symbol algebra of $D$ at $x\in M$. $\g(x)$ is isomporphic to $\g^0$ iff $B^3$ has growth vector $(3,4,5)$ at $x$. $\g(x)$ is isomporphic to $\g^1$ iff $B^3$ at $x$ has growth vector $(3,5)$ at $x$.
\end{lemma}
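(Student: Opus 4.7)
The plan is to read off the growth vector of $B_3$ at $q(x)$ from an adapted local frame, and to observe that the discriminating bracket is $[Y,Y_1]$, whose class modulo $D_5$ is precisely the parameter $d$ of Lemma~\ref{l4}.

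I would fix a frame $(X_1,X_2,Y,Y_1,Y_2,Z)$ as in Section~\ref{s2} with $X_1$ spanning $D_1$ and, following Lemma~\ref{l4}, adapted to the non-degenerated parabolic form so that $a_{ij}=0$ except $a_{22}\neq 0$. Under the quotient $q\colon M\to N$ one has $\bar X_1=0$, so $B_3=\spn(\bar X_2,\bar Y,\bar Y_1)$ and $B_4=q_*(D_5)=B_3+\spn(\bar Y_2)$. Since $D_1\subset D_5$, a vector $V(x)\in T_xM$ projects into $B_4$ if and only if $V(x)\in D_5(x)$. The three pairwise brackets of the generators of $B_3$ then satisfy: $[\bar X_2,\bar Y]=\bar Y_2\in B_4$ always; $[\bar X_2,\bar Y_1]\in B_4$ always, because $[X_2,Y_1]\in[D_2,D_4]=D_5$ by \eqref{eq2}; and $[\bar Y,\bar Y_1]$ lies in $B_4$ exactly when $[Y,Y_1]\in D_5$, i.e.\ exactly when $d=0$ in Lemma~\ref{l4}.

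Consequently, if $d=1$ then $[B_3,B_3]$ escapes $B_4$, yielding $[B_3,B_3](q(x))=T_{q(x)}N$ and growth vector $(3,5)$. If $d=0$, all three brackets stay in $B_4$, and combined with $\bar Y_2\in[B_3,B_3]$ this gives $[B_3,B_3](q(x))=B_4(q(x))$ of rank $4$. To confirm the growth extends to $(3,4,5)$, I would observe that $[\bar X_2,\bar Y_2]=q_*([X_2,Y_2])$ is transverse to $B_4$, which corresponds precisely to the non-degeneracy $a_{22}\neq 0$.

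The main item needing verification is the bracket identity $[X_2,Y_1]\in D_5$, which relies on the non-degenerated relation $[D_2,D_4]=D_5$ from \eqref{eq2}; everything else is a direct translation of the symbol data from Lemma~\ref{l4} into statements about the pushforward on $N$, together with the pointwise compatibility between the filtered pieces on $M$ and the distributions $B_3\subset B_4$ on $N$.
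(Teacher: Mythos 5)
Your proposal is correct and follows essentially the same route as the paper: the paper also fixes the adapted frame of Lemma \ref{l4}, uses $B_3=q_*(D_4)=q_*(\spn\{X_1,X_2,Y,Y_1\})$, and reads off the growth vector of $B_3$ from the symbol brackets, declaring the conclusion ``obvious'' where you spell out that the discriminating bracket is $[Y,Y_1]=dZ \bmod D_5$ and that transversality to $B_4$ is detected by membership in $D_5$. Your explicit verification of the $(3,4,5)$ case via $[\bar X_2,\bar Y_2]$ reproduces the paper's appeal to $[B_1,B_4]=TN$ from Lemma \ref{l6}, so nothing is missing.
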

\begin{proof}
By Lemma \ref{l6} there are only two possible growth vectors of $B_3$: $(3,4,5)$ or $(3,5)$ (because $\rk\,[B_3,B_3]\geq\rk\,[B_1,B_3]=4$ and $[B_3,[B_3,B_3]]\supset[B_1,B_4]=TN$). Let us choose a local frame $(X_1,X_2,Y,Y_1,Y_2,Z)$ as in the proof of Lemma \ref{l4}. If we take into account that $B_3=q_*(D_4)=q_*(\spn\{X_1,X_2,Y,Y_1\})$ then a characterisation of symbol $\g(x)$ in terms of $B_3$ becomes obvious.
\end{proof}

In order to exclude singular points we will need one more regularity condition.

\begin{definition}
A non-degenerated parabolic $(3,5,6)$-distribution $D$ is called \emph{completely non-degenerated} if the associated distribution $B_2$ has locally constant growth vector.
\end{definition}

It follows that if $D$ is completely non-degenerated then either $B_2$ is integrable or $[B_2,B_2]=B_3$. In the second case $B_3$ is determined by $B_2$ and thus  $D$ is determined by the pair $(B_1,B_2)$.

In view of Lemma \ref{l5} and Lemma \ref{l6} there are four possibilities at a point $x\in M$. A non-degenerated parabolic $(3,5,6)$-distribution can have a symbol algebra $\g^0$ or $\g^1$ and $B_2$ can be integrable or not. 
Note that if $D$ has symbol $\g^1$ at $x$ then it has symbol $\g^1$ in a neighbourhood of $x$.

\begin{remark}
In the paper \cite{K2} we have introduced the notion of \emph{regular pairs}. We say that a pair $(E,F)$ of two distributions on a manifold $M$ of dimension $n$ is regular if
\begin{enumerate}
\item $\rk\,E=1$, $\rk\, F=2$, $E\subset F$
\item $\rk\,\ad_E^iF=i+2$ for $i=1,\ldots,n-2$, where $\ad^i_EF$ are distributions defined by induction: $\ad_EF=[E,F]$ and $\ad^{i+1}_EF=[E,\ad^i_EF]$.
\end{enumerate}

It is proved in \cite{K2} that the notion of regular pairs generalises the notion of ODEs. Namely, for a given equation of order $k+1$ we have a canonical regular pair on the space of $k$-jets. The pair consists of Cartan distribution and rank one distribution spanned by the total derivative. Pairs which are locally diffeomorphic to pairs which come from ODEs are called \emph{of equation type}. An intrinsic characterisation of such pairs is given in \cite{K2}.

For a regular pair there is also a notion of \emph{W\"unschman condition}, which generalises the notion of W\"unschman condition in the case of ODEs. We proved in \cite{K2} that there is one-to-one correspondence between $Gl(2)$-structures and regular pairs satisfying W\"unschman condition. 
\end{remark}

Now we are in position to state our main results.

\begin{theorem}\label{t2}
Let $D$ be a completely non-degenerated parabolic $(3,5,6)$-distribution on $M$ such that the associated distribution $B_2$ on $N$ is non-integrable. Then
\begin{enumerate}
\item the pair $(B_1,B_2)$ is regular in the sense of \cite{K2},
\item there exists a canonical frame on a $T(2)$-bundle over $N$ and two distributions are equivalent iff the corresponding frames are diffeomorphic (here $T(2)\subset Gl(2)$ is the subgroup of upper-triangular matrices),
\item the pair $(B_1,B_2)$ is of equation type iff $D$ has constant symbol algebra $\g^0$,
\item if $(B_1,B_2)$ satisfies W\"unschmann condition then it defines a $Gl(2)$-structure on the quotient manifold $N/B_1$, which is of dimension 4; conversely all germs of $Gl(2)$-structures on 4-dimensional manifolds can be obtained in this way.
\end{enumerate}
\end{theorem}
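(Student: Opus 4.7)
The plan is to use the reduction of Section \ref{s3} to transfer the problem from $D$ on $M$ to the pair $(B_1,B_2)$ on $N$, and then to invoke the theory of regular pairs and the W\"unschmann condition developed in \cite{K2}.

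For part 1, I verify the two axioms of a regular pair directly. By construction $\rk B_1=1$, $\rk B_2=2$ and $B_1\subset B_2$. Since $B_2$ is non-integrable and contains $B_1$, a local frame $(X,Y)$ of $B_2$ with $X$ spanning $B_1$ satisfies $[X,Y]\notin B_2$, so $[B_1,B_2]=[B_2,B_2]=B_3$ has rank $3$. By Lemma \ref{l6} then $\ad^2_{B_1}B_2=[B_1,B_3]=B_4$ has rank $4$ and $\ad^3_{B_1}B_2=[B_1,B_4]=TN$ has rank $5$. Since $\dim N=5$ this exhausts the condition $\rk\,\ad^i_{B_1}B_2=i+2$ for $i=1,\ldots,3$, so $(B_1,B_2)$ is regular. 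Part 2 follows by combining this with Corollary \ref{c1}: in the non-integrable branch one has $B_3=[B_2,B_2]$, so the reduced pair $(B_1,B_3)$ is completely determined by $(B_1,B_2)$, and the equivalence problem for $D$ reduces to the equivalence problem for regular pairs; the canonical frame on a $T(2)$-bundle over $N$ is then the one produced by the construction of \cite{K2} applied to $(B_1,B_2)$.

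For part 3, Lemma \ref{l5} identifies constant symbol $\g^0$ with $B_3$ having growth vector $(3,4,5)$, equivalently with the existence of a rank one Cauchy characteristic $C$ of $B_3$ transverse to $B_1$. A direct computation on $J^3(\R,\R)$ with the Cartan/total-derivative pair $B_1=\spn\{\partial_t+u_1\partial_u+u_2\partial_{u_1}+u_3\partial_{u_2}\}$, $B_2=B_1\oplus\spn\{\partial_{u_3}\}$ shows that $B_3=[B_2,B_2]$ has growth $(3,4,5)$, so every equation type pair yields constant symbol $\g^0$. Conversely, constant symbol $\g^0$ gives a rank-one Cauchy characteristic $C$ of $B_3$ transverse to $B_1$, and then the nested leaf spaces of $C\oplus B_1$ and $B_1$ produce, respectively, the independent and dependent variable; the intrinsic characterisation of equation type from \cite{K2} (which in dimension five is precisely this Cauchy-characteristic condition) places $(B_1,B_2)$ in the canonical 4th order ODE form.

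For part 4, the direct implication is a straight application of the main correspondence of \cite{K2}: a regular pair on a 5-manifold satisfying the W\"unschmann condition descends to a $Gl(2)$-structure on the 4-dimensional quotient $N/B_1$. Conversely, the same correspondence of \cite{K2} lifts any $Gl(2)$-structure on a 4-manifold $P$ to a regular W\"unschmann pair $(B_1,B_2)$ on a 5-manifold $N$ with $P=N/B_1$; setting $B_3=[B_2,B_2]$ and taking $\tilde M=P(B_3/B_1)$ with its canonical rank 3 distribution $\tilde D$ as in Section \ref{s3}, the converse part of Lemma \ref{l6} guarantees that $\tilde D$ is a completely non-degenerated parabolic $(3,5,6)$-distribution whose associated pair recovers the starting data. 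The chief obstacle is this converse of part 4: one must confirm that the twistorial construction of \cite{K2} indeed produces a non-integrable $B_2$ (so that the complete non-degeneracy assumption of the theorem is available) and that the quotient $N\to N/B_1$ is compatible in the required way with the reduction of Section \ref{s3}; with this in hand the two constructions are mutually inverse, and the four-dimensional $Gl(2)$ case is fully accounted for by parabolic $(3,5,6)$-distributions.
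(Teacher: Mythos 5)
Your proposal is correct and follows essentially the same route as the paper: regularity of $(B_1,B_2)$ from non-integrability together with Lemma \ref{l6}, the equation-type criterion from \cite{K2} (that $[F,F]$ have growth vector $(3,4,5)$) combined with Lemma \ref{l5} for part 3, and Theorem 1.1 of \cite{K2} for part 4, with the equivalence statement reduced to the pair via Corollary \ref{c1} and the converse part of Lemma \ref{l6}. The only divergence is in part 2, where the paper attributes the canonical $T(2)$-frame to \cite{DKM} in the equation-type case and defers the general regular-pair case to the forthcoming \cite{K3} rather than to \cite{K2} as you do; since neither argument actually constructs the frame here, this is a matter of citation rather than substance.
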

\begin{proof}
Statement 1 follows from the assumption: $[B_2,B_2]=B_3$, which implies $[B_1,B_2]=B_3$ and together with Lemma \ref{l6} proves that $(B_1,B_2)$ is a regular pair. In order to prove statement 3 let us recall from \cite{K2} that a regular pair $(E,F)$ on 5-dimensional manifold is of equation type if and only if $[F,F]$ has growth vector $(3,4,5)$. Thus statement 3 follows from Lemma \ref{l5}. Statement 4 is just a consequence of statement 1 and results of \cite{K2} (Theorem 1.1).

Statement 2 in the case of regular pairs of equation type follows from \cite{DKM}, where a prove is given that for an arbitrary equation of 4th order there is a normal Cartan connection on a $T(2)$-bundle. The result can be generalised to the case of an arbitrary regular pair and we will provide a proof in a forthcoming paper \cite{K3}. However, in the general case one do not get a Cartan connection, but just a frame on a bundle.
\end{proof}

\begin{theorem}\label{t3}
Let $D$ be a completely non-degenerated parabolic $(3,5,6)$-distribution on $M$ such that the associated distribution $B_2$ on $N$ is integrable. If $D$ has constant symbol $\g^0$ then
\begin{enumerate}
\item Cauchy characteristic $C$ of $B_3$ is contained in Cauchy caracteristic of $B_2$ and thus there is a well defined reduction $p\colon N\to O$, $A_1=p_*(B_2)$, $A_2=p_*(B_3)$, where $O=N/C$, and $D$ is uniquely defined by the pair $(A_1, A_2)$
\item the pair $(A_1,A_2)$  is regular in the sense of \cite{K2},
\item if $(A_1,A_2)$ satisfies W\"unschmann condition then it defines a conformal Lorentz metric on the quotient manifold $O/A_1$, which is of dimension 3; conversely all germs of conformal Lorentz metrics on 3-dimensional manifolds can be obtained in this way.
\end{enumerate}
\end{theorem}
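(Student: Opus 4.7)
The plan is to prove the three claims in turn, leveraging the reduction machinery of Section \ref{s3} and Theorem 1.1 of \cite{K2}.

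For statement 1, the key observation is that an integrable distribution equals its own Cauchy characteristic. From the proof of Lemma \ref{l6} we know that when $D$ has symbol $\g^0$, i.e. when $B_3$ has growth vector $(3,4,5)$, one has $B_2 = B_1 \oplus C$, so $C \subset B_2$. Since $B_2$ is integrable by hypothesis, $C$ is automatically in the Cauchy characteristic of $B_2$, while it is Cauchy for $B_3$ by definition. The quotient $p\colon N \to O = N/C$ is therefore well defined, and both $A_1 = p_*(B_2)$ of rank $1$ and $A_2 = p_*(B_3)$ of rank $2$ descend. For the uniqueness of $D$ given $(A_1, A_2)$ — understood up to equivalence in the spirit of Corollary \ref{c1} — one reconstructs $N$ from $(A_1, A_2)$ by an analogue of the projectivisation of Section \ref{s3} (using $A_3 = [A_1, A_2]$ to form a natural $\mathbb{P}^1$-bundle over $O$), recovers $B_2, B_3, C, B_1$ tautologically, and then applies Corollary \ref{c1} to descend to the level of $D$.

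Statement 2 is a Lie bracket computation on $N$ pushed down to $O$. Because $C$ lies in the Cauchy characteristic of both $B_2$ and $B_3$, projectable local sections exist in abundance, and the Lie bracket is natural under $p_*$, so $p_*[B_i, B_j] = [A_i, A_j]$. From Lemma \ref{l6} we have $[B_1, B_3] = B_4$ of rank $4$ and $[B_1, B_4] = TN$ of rank $5$; since $C \subset B_3 \subset B_4$, each projection drops the rank by exactly one, yielding $\rk\,[A_1, A_2] = 3$ and $\rk\,[A_1, [A_1, A_2]] = 4 = \dim O$. This is precisely regularity of $(A_1, A_2)$ in the sense of \cite{K2}.

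For statement 3, we combine statement 2 with Theorem 1.1 of \cite{K2}: a regular pair on a $4$-dimensional manifold satisfying the W\"unschmann condition yields a canonical $Gl(2)$-structure on the $3$-dimensional leaf space $O/A_1$. Classically \cite{Ch,FKN,K2}, a $Gl(2)$-structure on a $3$-manifold is exactly a conformal Lorentzian metric: the natural discriminant form on $S^2(\R^2) \cong \R^3$ has signature $(2,1)$ and is $Gl(2)$-invariant up to scale. Conversely, starting from a conformal Lorentz metric on a $3$-manifold, Chern's construction supplies a $3$rd order ODE satisfying W\"unschmann, i.e. a regular pair $(A_1, A_2)$ on a $4$-dimensional manifold satisfying W\"unschmann; lifting through the inverse constructions of statement 1 and Section \ref{s3} produces a parabolic $(3,5,6)$-distribution realising it.

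The main obstacle is the reconstruction step in statement 1: the map $p_*$ collapses the splitting $B_2 = B_1 \oplus C$, so $B_1$ cannot be read off $A_1$ alone. The remedy, as indicated above, is to rebuild $N$ as a tautological $\mathbb{P}^1$-bundle over $O$ in the style of $\tilde M = P(B_3/B_1)$ from Section \ref{s3}, so that $B_1$ is specified by an intrinsic tautological incidence condition on the reconstructed bundle; once this is in place, all other parts of the argument are routine bracket bookkeeping and citations of \cite{K2} and \cite{Ch}.
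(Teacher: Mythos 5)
Your handling of the well-definedness of the reduction in statement 1, the regularity computation in statement 2, and statement 3 coincides with the paper's proof: the paper likewise invokes $B_2=B_1\oplus C$ from the proof of Lemma \ref{l6}, notes that integrability of $B_2$ places $C$ in its Cauchy characteristic, deduces regularity of $(A_1,A_2)$ from the growth vector $(3,4,5)$ of $B_3$ together with $[B_2,B_4]=TN$, and then cites Theorem 1.1 of \cite{K2}. (One small imprecision: since $A_1=p_*(B_2)$ and not $p_*(B_1)$, the bracket you need is $[B_2,B_3]$ rather than $[B_1,B_3]$; this is harmless because $[B_2,B_3]=[B_1,B_3]+[C,B_3]=B_4$.)

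The genuine problem is your proposed reconstruction of $D$ from $(A_1,A_2)$. The mechanism of Lemma \ref{l2} works only because the second relation of \eqref{eq1}, $[D_1,D_3]=D_4\neq D_3$, forces $q_*(D_3(x))$ to vary along the fibres of $q$, which is what makes the tautological map $\Phi$ into $P(B_3/B_1)$ a local diffeomorphism. For the reduction $p\colon N\to O=N/C$ this mechanism is absent: $C$ is by definition the Cauchy characteristic of $B_3$, so $p_*(B_3(n))=A_2(p(n))$ is constant along each fibre of $p$, and likewise $p_*(B_1(n))=A_1(p(n))$ for every $n$. Hence the analogue of $\Phi$ into $P([A_1,A_2]/A_1)$ collapses the fibres of $p$ and cannot identify $N$ with that $\mathbb{P}^1$-bundle; moreover, if one builds the tautological bundle anyway, the vertical direction is not Cauchy for the tautological rank-$3$ distribution, so the resulting structure is of the wrong type. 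There is also no tautological incidence condition singling out $B_1$ inside $B_2=p_*^{-1}(A_1)$: every rank-one complement of $C$ in $B_2$ projects onto the same $A_1$. What a correct argument must show instead is that all such complements yield equivalent distributions, i.e.\ that the local diffeomorphisms of $N$ covering the identity on $O$ and preserving $C$ and $B_3$ act transitively on the admissible choices of $B_1$ (normalizing $B_1$ amounts to solving a first-order linear PDE in the fibre variable, as one checks in the explicit model of Section 6). This is a normalization argument, not a tautological reconstruction. To be fair, the paper's own proof is silent on the uniqueness clause of statement 1 as well; but the route you sketch would not close that gap.
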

\begin{proof}
Statement 1 follows from the fact that $B_2=B_1\oplus C$, where $C$ is Cauchy characteristic of $B_3$ (see proof of Lemma \ref{l6}). If $B_2$ is integrable then $C$ is also, obviously, contained in Cauchy characteristic of $B_2$ and thus the reduction is well defined. Statement 2 follows from the fact that $B_3$ has growth vector $(3,4,5)$ which implies that $A_2$ has growth vector $(2,3,4)$. Additionally $[A_1,A_3]=TO$ since $[B_2,B_4]=TN$ as was proved in Lemma \ref{l6}. Statement 3 is a corollary of Theorem 1.1 \cite{K2} applied to regular pairs on 4-dimensional manifolds.
\end{proof}
\begin{remark}
Since all regular pairs on 4-dimensional manifolds are of equation type (see \cite{FKN,K2}), the problem of equivalence of parabolic $(3,5,6)$-distributions described in Theorem \ref{t3} is reduced to the problem of contact equivalence of ODEs of third order. The last problem was solved by Chern \cite{Ch} who constructed a Cartan connection taking values in $\mathfrak{sp}(4,\R)$ (see also \cite{DKM}).
\end{remark}
\vskip 2ex
{\bf Open problem.} Classify all non-degenerated parabolic $(3,5,6)$-distributions with integrable $B_2$ and symbol $\g^1$. In this case $B_3$ is equivalent to Cartan distribution on the space $J^1(\R, \R^2)$ and $B_2$ is its integrable subdistribution tangent to the fibres of the projection $J^1(\R, \R^2)\to J^0(\R, \R^2)$. However the choice of $B_1\subset B_2$ seems to lead to non-equivalent $D$.

\section{Symmetric models and PDEs}
In this section we will provide examples of non-degenerated parabolic $(3,5,6)$-distributions.

We start with two flat models. Namely, for algebras $\g^0$ and $\g^1$ we can construct Lie groups $G^0$ and $G^1$ such that $\g^i$ is the Lie algebra of $G^i$. Then on $G^i$, for $i=0,1$, we can define a left-invariant rank 3 distribution $D^i$ such that at the identity element $e\in G^i$ we have $D^i(e)=\g^i_{-1}\oplus\g^i_{-2}\oplus\g^i_{-3}$. Then it is clear that on $G^i$ there exists a frame $(X_1,X_2,Y,Y_1,Y_2,Z)$ of left invariant vector fields which is adapted to $D^i$ in a sense of the proof of Lemma \ref{l4} and have structural constants such as algebra $\g^i$. Moreover any distribution which has an adapted frame with structural constants such as algebra $\g^i$ is locally equivalent to the distribution $D^i$ on $G^i$. We call $D^i$ the \emph{flat distribution} of type $\g^i$. Below we will presents PDE models for $D^0$ and $D^1$, but before we do this we will show models of distributions corresponding to ODEs from Theorems \ref{t2} and \ref{t3}. They can be relatively easy obtained in a proces inverse to the reduction of Section \ref{s3}.

Any non-degenerated parabolic $(3,5,6)$-distribution $D$ with integrable $B_2$ and constant symbol algebra $\g^0$ is locally equivalent to a distribution on $\R^6$, with coordinates $(u_1,u_2,u_3,x,y,z)$, annihilated by the following one-forms
$$
du_1-u_2dx,\qquad du_2-zdx,\qquad du_3+F(x,u_1,u_2,z)ydx+zdy
$$
for a function $F$ in four variables (the function can be arbitrary). Substituting $u:=u_1$ and $v:=u_3$ we get that the distribution defines the following system of PDEs
\begin{equation}\label{eq3}
u_y=0,\qquad v_y=-u_{xx},\qquad v_x=-Fy.
\end{equation}
A function $F$ is just a function which defines the corresponding 3rd order ODE
$$
\varphi'''=F(t,\varphi,\varphi',\varphi'').
$$

Similarly, any non-degenerated parabolic $(3,5,6)$-distribution $D$ with non-integrable $B_2$ and constant symbol algebra $\g^0$ is locally equivalent to a distribution on $\R^6$, with coordinates $(u_1,u_2,u_3,x,y,z)$, annihilated by the following one-forms
$$
du_1-u_2dx,\qquad du_2-zdx,\qquad du_3-(F(x,u_1,u_2,z,u_3+yz)-yu_3-y^2z)dx+zdy
$$
for a function $F$ in five variables. Substituting $u:=u_1$ and $v:=u_3$ we get that the distribution defines the following system of PDEs
\begin{equation}\label{eq4}
u_y=0,\qquad v_y=-u_{xx},\qquad v_x=G(x,y,u,u_x,u_{xx},v)
\end{equation}
where $G(x,y,u,u_x,u_{xx},v)=F(x,u,u_x,u_{xx},v+yu_{xx})-yv-y^2u_{xx}$. A function $F$ defines the corresponding 4th order ODE
$$
\varphi^{(4)}=F(t,\varphi,\varphi',\varphi'',\varphi''').
$$

Taking $F=0$ in \eqref{eq3} and \eqref{eq4} we get the models of distributions corresponding to the trivial equations $\varphi'''=0$ and $\varphi^{(4)}=0$. Explicite, we have
\begin{equation}\label{eq5}
u_y=0,\qquad v_y=-u_{xx},\qquad v_x=0
\end{equation}
for order 3, and
\begin{equation}\label{eq6}
u_y=0,\qquad v_y=-u_{xx},\qquad v_x=-yv-y^2u_{xx}
\end{equation}
for order 4. Note that \eqref{eq5} gives also a PDE model for the flat distribution with symbol algebra $\g^0$. On the other hand \eqref{eq6} corresponds to the flat $Gl(2)$-structure on 4-dimensional manifold.

A PDE system corresponding to the flat distribution with symbol algebra $\g^1$ has the following form
$$
u_y=\frac{1}{2}(u_{xx})^2,\qquad u_{xy}=0,\qquad v_y=u_{xx},\qquad v_x=0.
$$
\vskip 2ex
{\bf Open problem.} Find PDE systems corresponding to all parabolic $(3,5,6)$-distributions with symbol algebra $\g^1$.


\begin{thebibliography}{99}
\bibitem{AS} A. Agrachev, Y. Sachkov, \textit{Control Theory from the Geometric Viewpoint}, Springer (2004).
\bibitem{B} R. Bryant, \textit{Two exotic holonomies in dimension four, path geometries, and twistor theory}, Proc. Sympos. Pure Math. 53 (1991), 33-88.
\bibitem{B2} R. Bryant, \textit{Conformal geometry and 3-plane fields on 6-manifolds}, in Developments of Cartan Geometry and Related Mathematical Problems, RIMS Symposium Proceedings, vol. 1502 (July, 2006), pp. 1-15, Kyoto University.
\bibitem{C} E. Cartan, \textit{Les systemes de Pfaff a cinq variables et
les equations aux derivees partielles du second ordre}, Ann. Sci. Ecole Normale 27(3), 1910, pp 109-192.
\bibitem{Ch} S-S. Chern, \textit{The Geometry of the Differential
Equation $y'''=F(x,y,y',y'')$}, Sci Rep. Nat. Tsing Hua Univ., 4
(1940), pp. 97-111.
\bibitem{D} B. Doubrov, \textit{Conformal geometries associated with 3-dimensional vector distributions}, Conformal structures and ODEs, Banach Center, Warsaw, September 16-18 (2010), lecture.
\bibitem{DKM} B. Doubrov, B. Komrakov, T. Morimoto \textit{Equivalence of holonomic differential equations} Lobachevskii Journal of Mathematics, Vol.3, pp.39-71 (1999).
\bibitem{DZ1} B. Doubrov, I. Zelenko, \textit{On local geometry of nonholonomic rank 2 distributions}, Journal of the London Mathematical Society, 80(3):545-566, 2009;
\bibitem{DZ2} B. Doubrov, I. Zelenko, \textit{On local geometry of rank 3 distributions with 6-dimensional square}, arXiv:0807.3267v1[math. DG],40 pages.
\bibitem{DG} M. Dunajski, M. Godlinski, \textit{$GL(2, R)$ structures, $G_2$ geometry and twistor theory}, arXiv:math/1002.3963. Quart. J. Math (2010).
\bibitem{DT} M. Dunajski, P. Tod, \textit{Paraconformal geometry of n-th order ODEs, and exotic holonomy in dimension four}, J. Geom. Phys., 56 (2006), pp. 1790-1809.
\bibitem{FKN} S.~Frittelli, C.~Kozameh, E.~T.~Newman, \textit{Differential Geometry from Differential Equations}, Commun. Math. Phys. 223, 383-408 (2001).
\bibitem{GN} M.~Godli\'nski, P.~Nurowski, \textit{GL(2,R) geometry of ODE's}, J. Geom. Phys., 60 (2010) pp. 991-1027.
\bibitem{JKP} B. Jakubczyk, W. Kry\'nski, F. Pelletier, \textit{Characteristic vector fields of generic distributions of corank 2}, Annales de l'Institut Henri Poincare (C) Non Linear Analysis, Volume 26, Issue 1, January-February 2009, Pages 23-38.
%\bibitem{K} W. Kry\'nski, \textit{On contact equivalence of systems of ordinary differential equations}, arXiv:0712.1455v2 [math.CA], 2009.
\bibitem{K1} W. Kry\'nski, \textit{Singular curves determine generic distributions of corank at least 3}, J. Dynamical and Control Systems, Vol.11, No.3 (2005), 375-388.
\bibitem{K2} W. Kry\'nski, \textit{Paraconformal structures and differential equations}, Differential Geometry and its Applications
Volume 28, Issue 5, October 2010, Pages 523-531.
\bibitem{K3} W. Kry\'nski, \textit{Canonical frames for $Gl(2)$-structures}, preprint (2010), arXiv:1012.0711v1 [math.DG].
\bibitem{KZ} W. Kry\'nski, I. Zelenko, \textit{Canonical frames for distributions of odd rank and corank 2 with maximal first Kronecker index}, Journal of Lie Theory, to appear, arXiv:1003.1405 (2010).
\bibitem{R} O. Kuzmich, \textit{Graded nilpotent Lie algebras in low dimensions}, Lobachevskii Journal of Mathematics, Vol.3, pp.147-184.
\bibitem{M} R. Montgomery, \textit{A survey on singular curves in sub-Riemannian geometry}, J. Dynamical and Control Systems, Vol.1, No.1 (1995), 49-90.
\bibitem{N} P. Nurowski, \textit{Differential equations and conformal structures}, J. Geom. Phys. 55 pp. 19-49 (2005).
\bibitem{Z} I. Zelenko, \textit{On variational approach to differential invariants of rank two distributions }, Differential Geometry and its Applications Volume 24, Issue 3, May 2006, Pages 235-259 
\end{thebibliography}
\end{document}